\title{An Energy Bound in the Affine Group}
\author{Giorgis Petridis, Oliver Roche-Newton, Misha Rudnev and Audie Warren}
\newtheorem{theorem}{Theorem}[section]
\newtheorem{corollary}[theorem]{Corollary}
\newtheorem{lemma}[theorem]{Lemma}
\begin{document}
 \maketitle

\begin{abstract} 
We prove a nontrivial energy bound for a finite set of affine transformations over a general field and discuss a number of implications. These include new bounds on growth in the affine group, a quantitative version of a theorem by Elekes about rich lines in grids. We also give a positive answer to a question of Yufei Zhao that for a plane point set $P$ for which no line contains a positive proportion of points from $P$, there may be at most one line, meeting the set of lines defined by $P$ in at most a constant multiple of $|P|$ points.
\end{abstract}

 \section{Introduction} 
 
 Let $\mathbb F$ be a field with multiplicative group $\mathbb{F}^*$. We are particularly interested in the zero characteristic case $\mathbb F=\mathbb R$ (or $\mathbb C$, there is no difference as far as the results we present are concerned) or $\mathbb F =\mathbb{F}_p$, where $p$ is an odd prime (there is no difference as to our results between $\mathbb{F}_p$ and a general $\mathbb{F}$ with characteristic $p$).

Let $A$ be a finite set of transformations in the affine group $G=\emph{Aff}(\mathbb{F}) = \mathbb{F}\rtimes \mathbb{F}^*$. For $g\in G$ we write $g=(g_1,g_2)\in \mathbb{F}^*\times \mathbb{F}$ and visualise it as a point in $\mathbb{F}^2$ with the $y$-axis deleted. $G$ has a unipotent normal subgroup $U=\{(1,x):x\in \mathbb{F}\}$ and maximal tori $T=T(x)=\emph{Stab}(x)$, for the action $g(x)=g_1x+g_2$ on $\mathbb{F}$. The tori correspond to finite slope lines through the identity $(1,0)$ in the coordinate plane. Their left cosets are given by families of parallel lines with the given slope; their right cosets by families of lines incident to the deleted $y$-axis at the point where the line corresponding to $T$ itself meets it. Cosets of $U$ are vertical lines.

Below we use the notation $A^k$ for the $k$-fold Cartesian product of $A$ with itself, not to be confused with $A^{-1}$, the set of inverses of elements of $A$ or the product sets $A^{-1}A, AA$, etc. We define two versions of the energy of $A$ as
\begin{equation}\label{e:eng}
E(A) := | \{(g,h,u,v)\in A^4:\, g^{-1}\circ h= u^{-1}\circ v\}|\,,\; E^*(A)  :=|\{(g,h,u,v)\in A^4:\, g\circ h= u \circ v\}|\,.
\end{equation}
These are different quantities. A standard application of the Cauchy-Schwarz inequality yields the following bounds connecting $E(A)$ and $E^*(A)$ with $A^{-1}A$ and $AA$, respectively.
\begin{equation} \label{CSbasic}
E(A) \geq \frac{|A|^4}{|A^{-1}A|}\,,\,\,\,\,\,\,\,\,\,E^*(A) \geq \frac{|A|^4}{|AA|}\,.
\end{equation}
In fact, Shkredov \cite[Section 4]{Sh} proved the inequality $E^*(A)\leq E(A)$.

Also, for a scalar set $S$ we use the standard notations $E^+(S)$ (and respectively $E^\times(S)$) for its additive (and respectively multiplicative) energy.

Note that in principle one cannot expect to have a nontrivial upper bound on $E(A)$, since $A$ can lie in a coset of an Abelian subgroup. But  the paradigm of the rich theory of growth in groups, developed in the past 15 years, is that this is the only case when $A$ would not grow by multiplication. The methods it uses, however, are not only very general, applying to a wide class of finite non-commutative groups to yield ``reasonable''  quantitative estimates on the growth rate, but also technically work with sets, rather than their energies. Thus but for the case of sufficiently large sets, where one can use Fourier analysis and representation theory to describe {\em quasirandom group actions} (see  e.g. \cite{Gill} and references therein), the only way to connect this with the energy of a set of transformations has been via the non-commutative Balog-Szemer\'edi-Gowers Theorem.

The first nontrivial instance of a non-commutative group is the affine group. An element $(g_1,g_2) \in G$ can be identified with the line $y=g_1x+g_2$. In \cite{RuSh} the following incidence theorem was proved, establishing an explicit connection between energy in the affine group and incidence theory.\footnote{Here and throughout the paper, for positive values $X$ and $Y$, the notation $X \gg Y$ and $Y \ll X$ indicates that there is an absolute constant $C>0$ such that $X \geq CY$.}

\begin{theorem}	\label{t:Elekes_new}
	Let $S, T\subseteq \mathbb{F}$ be finite scalar sets and ${A}$  a finite set of non-vertical, non-horizontal lines in $\mathbb{F}^2$. \\
	If $\mathbb{F} = \mathbb{R}$, then
	\begin{equation}\label{f:Elekes_new_R}
	{I} (S\times T, {A}) \ll |T|^{1/2} |S|^{2/3} E^{1/6}({A}) |{A}|^{1/3} +  |T|^{1/2} |{A}| \,.
	\end{equation}
	If  $\mathbb{F} = \mathbb{F}_p$, then 
	\begin{equation}\label{f:Elekes_new_Fp}
	{I} (S\times T, {A}) \ll |T|^{1/2} |S|^{5/8} E^{1/8}({A}) |{A}|^{1/2} + |T|^{1/2} |{A}| \cdot  \sqrt{\max\{ 1, |S|^2/p \}} \,.
	\end{equation}
\end{theorem}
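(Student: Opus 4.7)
The plan is to reduce the incidence count to the affine energy of $A$ via a Cauchy--Schwarz argument, and then to apply the Szemer\'edi--Trotter theorem (respectively its $\mathbb{F}_p$ analogue) together with the energy assumption. Writing $I(S \times T, A) = \sum_{t \in T} N(t)$, where $N(t) = |\{(s, a) \in S \times A : a(s) = t\}|$, Cauchy--Schwarz in $t$ gives
\[
I^2 \le |T| \sum_{t \in T} N(t)^2 \le |T|\cdot Q,
\qquad
Q := |\{(a, s, a', s') \in A^2 \times S^2 : a(s) = a'(s')\}|,
\]
where the second inequality drops the $t \in T$ restriction for an upper bound.

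I would next parameterize collisions in $Q$ by the group element $g = a^{-1} a' \in A^{-1} A$: the equation $a(s) = a'(s')$ is equivalent to $g(s') = s$, so
\[
Q = \sum_{g \in A^{-1}A} r(g)\cdot I_g(S \times S),
\qquad
r(g) := |\{(a, a') \in A^2 : a^{-1}a' = g\}|,
\]
with $I_g(S \times S) = |\{s' \in S : g(s') \in S\}|$ the number of incidences of the affine line $\ell_g : y = g_1 x + g_2$ with the grid $S \times S$.

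To bound this weighted incidence sum, I would dyadically decompose $A^{-1}A$ by the level sets $G_\tau = \{g : r(g) \sim \tau\}$. The identities $\sum_g r(g) = |A|^2$ and $\sum_g r(g)^2 = E(A)$ give the two bounds $|G_\tau| \le \min(|A|^2/\tau,\, E(A)/\tau^2)$. Combined with Szemer\'edi--Trotter, $I(S \times S, G_\tau) \ll (|S|^2 |G_\tau|)^{2/3} + |S|^2 + |G_\tau|$, and summing $\tau \cdot I(S\times S, G_\tau)$ over dyadic $\tau$ using the better of the two bounds on $|G_\tau|$ (with crossover at $\tau_\ast = E(A)/|A|^2$), the dominant contribution arises at this crossover, yielding
\[
Q \ll |S|^{4/3} E(A)^{1/3} |A|^{2/3} + \text{lower-order terms.}
\]
Substituting into $I^2 \le |T|\cdot Q$ and taking square roots recovers \eqref{f:Elekes_new_R}, after absorbing lower-order terms into $|T|^{1/2}|A|$ via $E(A) \ge |A|^2$ and the trivial bound $I \le |S||A|$.

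For the $\mathbb{F}_p$ case the same scheme works, but Szemer\'edi--Trotter is replaced by a point--line incidence bound valid in $\mathbb{F}_p^2$ (for instance the Stevens--de Zeeuw bound, which alters the exponents in a parallel computation to match the $5/8$ and $1/8$ appearing in \eqref{f:Elekes_new_Fp}). The restriction on grid size relative to $p$ required by these bounds is precisely what produces the extra factor $\sqrt{\max\{1, |S|^2/p\}}$ on the secondary term. The main technical obstacle is the dyadic optimization in the third paragraph: one must verify that the crossover $\tau_\ast = E(A)/|A|^2$ lies in the relevant dyadic range (which follows from $|A|^2 \le E(A) \le |A|^3$), and show that the off-diagonal terms from Szemer\'edi--Trotter---in particular the contribution $|S|^2|A|$ arising from its linear-in-points piece---are either dominated by $|S|^{4/3} E(A)^{1/3}|A|^{2/3}$ or absorbed into the trivial $|T|^{1/2}|A|$ term, which requires a short case-analysis comparing the sizes of $E(A)$, $|A|$, and $|S|$.
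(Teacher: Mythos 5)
The paper does not prove Theorem \ref{t:Elekes_new}; it imports it from \cite{RuSh}. Your argument is essentially the one used there: Cauchy--Schwarz in $t\in T$, rewriting the collision count as $\sum_g r_{A^{-1}A}(g)\, i_g(S\times S)$ with $i_g$ the number of grid points on the line $\ell_g$, and then estimating this weighted incidence sum by Szemer\'edi--Trotter (resp.\ Stevens--de Zeeuw) played against the two moment bounds $\sum_g r(g)=|A|^2$ and $\sum_g r(g)^2=E(A)$. The dyadic optimisation with crossover at $\tau_*=E(A)/|A|^2$ is correct and does produce the exponents $2/3,\,1/6,\,1/3$ (and $5/8,\,1/8,\,1/2$ over $\mathbb{F}_p$), and $\sum_\tau \tau|G_\tau|\le 2\sum_g r(g)=2|A|^2$ gives the secondary term without logarithms.

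The step you flag as the ``main technical obstacle'' is, however, where the sketch as written does not close. The contribution $\sum_\tau \tau|S|^2\ll |A||S|^2$ coming from the linear-in-points term of Szemer\'edi--Trotter is \emph{not} in general dominated by $|S|^{4/3}E(A)^{1/3}|A|^{2/3}$, nor absorbable into $|T|^{1/2}|A|$: take $E(A)\sim |A|^2$ and $|S|\sim |A|$, so that $|A||S|^2=|A|^3$ while the main term of $Q$ is $|A|^{8/3}$ and $|A|^2$ is smaller still; no comparison of $E(A)$, $|A|$, $|S|$ rescues this. The correct fix is not a case analysis but a sharper incidence input: for the grid $S\times S$ and non-vertical lines, the $+|S|^2$ term in Szemer\'edi--Trotter is superfluous, because if $|G_\tau|\ge |S|$ then $|S|^2\le |S|^{4/3}|G_\tau|^{2/3}$, while if $|G_\tau|<|S|$ the trivial per-line bound gives $I(S\times S,G_\tau)\le |S||G_\tau|\le |S|^{4/3}|G_\tau|^{2/3}$. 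With $I(S\times S,G_\tau)\ll |S|^{4/3}|G_\tau|^{2/3}+|G_\tau|$ your dyadic sum closes exactly as you describe. The analogous lower-order and $p$-dependent terms of the $\mathbb{F}_p$ incidence bound then need the same kind of care; they are the source of the factor $\sqrt{\max\{1,|S|^2/p\}}$.
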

Theorem \ref{t:Elekes_new} (along with other results) enabled \cite[Theorem 7]{RuSh}, which is a somewhat stronger version of the following classical theorem of Elekes \cite{Elekes1} on {\em rich lines in grids}.

\begin{theorem}[Elekes]
	Let $\alpha \in (0,1)$, $n$ be a positive integer, $A \subset \mathbb R$ with $|A|=n$ and suppose there are $n$ non-horizontal and non-vertical lines intersecting $A\times A$ in at least $\alpha n$ points.
	Then either\\
	$\bullet~$ at least $\gg \alpha^C n$ of these lines are parallel, or\\
	$\bullet~$ at least $\gg \alpha^C n$ of these lines are incident to a common point.\\
	Here $C>0$ is an absolute constant.
	\label{t:Elekes}
\end{theorem}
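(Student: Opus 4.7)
The plan is to reduce Elekes' theorem to the paper's main energy bound, with Theorem~\ref{t:Elekes_new} serving as the bridge between incidences and energy. I identify each of the $n$ non-horizontal, non-vertical rich lines $y=ax+b$ with the affine transformation $(a,b)\in G=\Aff(\R)$, obtaining a set $\cL\subset G$ with $|\cL|=n$. The hypothesis that every line supports at least $\alpha n$ points of $A\times A$ yields $I(A\times A,\cL)\ge \alpha n^2$. Applying Theorem~\ref{t:Elekes_new} in the real case with $S=T=A$ gives
\begin{equation*}
\alpha n^2 \;\ll\; n^{1/2}\cdot n^{2/3}\cdot E^{1/6}(\cL)\cdot n^{1/3} + n^{1/2}\cdot n \;=\; n^{3/2}E^{1/6}(\cL)+n^{3/2}.
\end{equation*}
We may assume $\alpha\gg n^{-1/2}$, since otherwise $\alpha^{C} n\le 1$ and the conclusion is vacuous once $C$ is large. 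The first term then dominates and yields $E(\cL)\gg \alpha^{6}n^{3}$.

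At this point I would invoke the paper's main result in its expected dichotomy form: if $A\subset G$ satisfies $E(A)\ge |A|^{3}/K$, then $A$ contains a subset of size $\gg K^{-O(1)}|A|$ lying inside a single coset of a maximal abelian subgroup of $G$. The maximal abelian subgroups of $G$ are the unipotent $U=\{(1,b):b\in\R\}$ of translations, and the tori $T(x_0)=\emph{Stab}(x_0)$ for $x_0\in\R$. Applied to $\cL$ with $K\ll\alpha^{-6}$, this produces a subset $\cL'\subseteq\cL$ with $|\cL'|\gg\alpha^{C}n$ contained in one such coset.

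It remains to translate the group-theoretic conclusion into geometry. A coset of $U$ (left or right, since $U$ is normal) consists of transformations sharing a common multiplicative part, so the corresponding lines share a slope and are parallel. A left coset of $T(x_0)$ consists of transformations sending $x_0$ to a common value, so the corresponding lines all pass through a single point with $x$-coordinate $x_0$; the analogous statement for right cosets holds with the roles of the two coordinates exchanged. In every case $\cL'$ is a family of $\gg\alpha^{C}n$ lines that is either all parallel or all incident to a common point, which is exactly the claimed dichotomy. The main obstacle is clearly the energy-to-structure implication invoked in the middle step, which encapsulates the genuinely non-commutative part of the argument and constitutes the paper's principal contribution; the flanking incidence bound and coset identification are routine bookkeeping.
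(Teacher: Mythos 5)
Your proposal is correct and follows essentially the same route as the paper's proof of its quantitative version (Theorem \ref{t1:Elekes}, of which Theorem \ref{t:Elekes} is the special case $k=n$): identify the rich lines with elements of $\emph{Aff}(\mathbb{R})$, use Theorem \ref{t:Elekes_new} to convert the incidence lower bound $\alpha n^2$ into the energy lower bound $E(\mathcal{L})\gg\alpha^6 n^3$, and then apply Theorem \ref{thm:main} in contrapositive form to extract a large subset in a coset of $U$ or of a torus, which is exactly a parallel or concurrent family. The only cosmetic difference is that you phrase the middle step as a black-box ``energy implies coset structure'' dichotomy, whereas the paper's Theorem \ref{thm:main} already carries the parameters $m$ and $M$ explicitly, so the contrapositive yields the rich coset directly (with $C=12$) without any Balog--Szemer\'edi--Gowers-type input.
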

As we have indicated above, parallel and concurrent lines are in correspondence with coset families in $\emph{Aff} (\mathbb{R})$ and hence Theorem \ref{t:Elekes} is a result on affine transformations. Theorem \ref{t:Elekes} and underlying ideas were further developed by several authors, having inspired Murphy's work \cite{Brendan_rich}, which deals with $\emph{Aff} (\mathbb{F})$, $\mathbb{F}$ being $\mathbb C$ or finite. Murphy's paper, as well as \cite{RuSh}, used and combined ideas of Elekes with Helfgott's theory of  growth in groups, exposed in \cite{HaH}.

Theorem \ref{t:Elekes} is {\em qualitative} in the sense that its proofs so far failed to yield a reasonable quantitative value of $C$, for the proofs involved a graph-theoretical Balog-Szemer\'edi-Gowers type argument, which is rather costly for quantitative estimates. 

However, it would not be difficult to derive Theorem \ref{t:Elekes} as a corollary of Theorem \ref{t:Elekes_new}, provided that there is a nontrivial bound on the energy $E(A)$ therein. Such estimates were obtained in \cite{RuSh} in the special case of $A=C\times D$ being a Cartesian product. The  bound obtained in \cite{RuSh} was that for some $\delta>0$, 
\begin{equation} \label{gridthreshold}
E(C \times D) \ll |C|^{5/2-\delta}|D|^3,
\end{equation}
provided that the sizes of $C$ and $D$ are not vastly different. The quantitative lower bound on $\delta$ would be, if attempted, quite small. On the other hand, the above estimate with $\delta=0$ can be regarded in a sense as a threshold one, and having any $\delta>0$  makes it strong enough to reach the point at which new information in the {\em arithmetic}  setting of sum-product theory is obtained, where incidence configurations involving such Cartesian products frequently arise. Several such applications were explored in \cite{RNW}, and also in \cite{RuSh}.

\section{Main results}
 
In this paper we prove the generalisation of the threshold version of estimate \eqref{gridthreshold} for any finite set $A\subset\emph{Aff}(\mathbb{F})$. This enables us to develop a number of {\em geometric} combinatorics estimates, which so far have been inaccessible. Our main result is the following:

\begin{theorem} \label{thm:main}

Let $A$ be a finite set of transformations in the affine group $\emph{Aff}(\mathbb{F})$ such that no line contains more than $M$ points of $A$, and no vertical line contains more than $m$ points of $A$. Also, suppose in positive characteristic $p,$ one has  $m|A|\leq p^2$. Then,
\[
\max\{ E(A) , E^*(A) \}  \ll  m^{\frac{1}{2}}|A|^{\frac{5}{2}} +  M|A|^2 .
\]
\end{theorem}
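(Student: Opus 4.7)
The Shkredov inequality $E^*(A)\le E(A)$ recalled above reduces the problem to bounding $E(A)$. A direct calculation unpacks the defining equation $g^{-1}h = u^{-1}v$ into the two scalar conditions
\[ h_1/g_1 = v_1/u_1 \qquad\text{and}\qquad (h_2-g_2)/g_1 = (v_2-u_2)/u_1. \]
My plan is to split the sum according to the relationship between $g_1$ and $u_1$. The diagonal $g=u$ forces $h=v$, contributing $|A|^2$, which is absorbed by $M|A|^2$. The vertical case $g\neq u$ with $g_1=u_1$ forces $h_1=v_1$ and $h_2-g_2=v_2-u_2$, and the count reduces to a sum of additive energies $\sum_{s,s'} E^+(A_s,A_{s'})$, where $A_s := \{t : (s,t)\in A\}$; the hypothesis $|A_s|\le m$ gives a bound of $m|A|^2$, which is absorbed into $m^{1/2}|A|^{5/2}$ since $m\le|A|$.

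For the main case $g_1\neq u_1$, I would identify $g=(g_1,g_2)\in A$ with a point in $\F^2$ and convert the two scalar conditions into the geometric statement that the ordered pairs $(g,u)$ and $(h,v)$ are matched by a common dilation of $\F^2$ whose centre lies on the $y$-axis. Equivalently, the lines $\overline{gu}$ and $\overline{hv}$ share the same $y$-intercept and the first-coordinate ratios $u_1/g_1$ and $v_1/h_1$ coincide. Grouping pairs $(g,u)\in A^2$ by their dilation parameters $(Y,\rho)$ (where $Y$ is the common $y$-intercept and $\rho=u_1/g_1$) and writing $F_{Y,\rho}$ for the fibre sizes, the generic contribution equals $\sum_{Y,\rho} F_{Y,\rho}^2$, with $\sum_{Y,\rho} F_{Y,\rho}\le |A|^2$.

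The technical heart is to bound this double sum by $m^{1/2}|A|^{5/2}+M|A|^2$. I would expand $\sum F^2$ as the number of $4$-tuples $(g,h,u,v)\in A^4$ related by a common such dilation, use the ratio equation to eliminate one scalar variable, and encode the surviving bilinear equation as a point--plane incidence problem in $\F^3$: the points are built from one of the two $A$-pairs, the planes from the other, and the equation is linear in each block of three coordinates. Over $\R$ or $\C$ I would invoke the Szemer\'edi--Trotter theorem; over $\F_p$ I would invoke Rudnev's point--plane incidence bound, whose non-triviality hypothesis $|P|\lesssim p^2$ coincides with the assumption $m|A|\le p^2$ in the theorem. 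The vertical-line cap $m$ then feeds into the collinearity parameter of the incidence bound to produce the main term $m^{1/2}|A|^{5/2}$, and the general-line cap $M$ produces the secondary term $M|A|^2$.

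The main obstacle is the incidence step itself. Cruder arguments are too weak: bounding $\max_{Y,\rho} F_{Y,\rho}$ and multiplying by $\sum F_{Y,\rho}\le|A|^2$ gives at best $E(A)\ll|A|^3$, and a Cauchy--Schwarz applied in only one of the two parameters loses a factor of $|A|^{1/2}$ or more. The correct setup must exploit the matching-ratio and matching-$y$-intercept conditions simultaneously, so that $m$ acts as the collinearity parameter inside a three-dimensional incidence count rather than merely as a uniform bound on $|A_s|$.
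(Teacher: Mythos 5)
Your reduction to $E(A)$ via Shkredov's inequality, the derivation of the two scalar equations, the treatment of the degenerate cases $g=u$ and $g_1=u_1$, and the intention to finish with Rudnev's point--plane theorem with a point set of size about $m|A|$ are all consistent with the paper. The gap is in the one step you yourself flag as the technical heart: which pairs of the quadruple to group together and what to fibre over before invoking the incidence bound. You group the quadruple into the dilation-matched pairs $(g,u)$ and $(h,v)$ and propose points built from one pair and planes from the other. With that grouping both conditions have the form $f(g,u)=f(h,v)$ (equal ratio $\rho=u_1/g_1$ and equal $y$-intercept $Y$), so after fibring over $\rho$ the surviving condition $\rho g_2-u_2=\rho h_2-v_2$ is a one-dimensional coincidence: the natural points $(u_1g_2-g_1u_2,\,u_1,\,g_1)=g_1(\rho g_2-u_2,\rho,1)$ all lie on a single line of $\mathbb{P}^3$, the collinearity parameter equals $|P|$, and the incidence bound degenerates to the trivial $|A|\cdot|\mathcal{D}_\rho|$, giving only $E(A)\ll|A|^3$. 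Running the incidence count globally with no fibration instead gives $|P|=|\Pi|=|A|^2$, hence $|\Pi||P|^{1/2}=|A|^3$ again (and loses the constraint tying $v_1$ to the other coordinates). Either way the main term $m^{1/2}|A|^{5/2}$ is out of reach with your pairing.

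The paper's key move (Lemmas \ref{lem:decomposition} and \ref{lem:Cfixed}) is to fibre over the \emph{cross} parameter $C=g_1v_1=h_1u_1$ and to pair the diagonals $(g,v)$ and $(u,h)$ of the quadrangle rather than its sides. For fixed $C$ the set $\mathcal C_C=\{(g,v):g_1v_1=C\}$ has size at most $m|A|$ (for each $g$ the abscissa of $v$ is forced, so $v$ ranges over one vertical line), the map $(g,v)\mapsto(g_1,g_2,g_1v_2)$ is injective, and the second energy equation $u_2g_1-u_1g_2-g_1v_2+u_1h_2=0$ becomes a nondegenerate point--plane incidence with three free coordinates on each side; Theorem \ref{thm:pointplane} gives $Q_C\ll|\mathcal C_C|^{3/2}+M|\mathcal C_C|$, and summing over $C$ using $\sum_C|\mathcal C_C|=|A|^2$ together with $|\mathcal C_C|\le m|A|$ yields the theorem. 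Two further corrections: you have the roles of $m$ and $M$ reversed --- $m$ enters as the bound on $|P_C|$ (whence the term $m^{1/2}|A|^{5/2}$ and the hypothesis $m|A|\le p^2$), while $M$ is the collinearity parameter $k$ of Theorem \ref{thm:pointplane} (whence $M|A|^2$); and the Szemer\'edi--Trotter theorem is a planar point--line bound, not what is applied over $\mathbb R$ --- the same point--plane theorem is used in every characteristic.
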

Note that if $A$ is taken to be a Cartesian product, this recovers \eqref{gridthreshold}, with $\delta=0$.

We remark that one may consider the asymmetric case of $E(A,B)$ for $A,B\subset \emph{Aff}(\mathbb{F})$, defined similar to $E(A)$ in \eqref{e:eng}, only now $g,u\in A;\,h,v\in B$.  Give the notations $m,M$ of Theorem \ref{thm:main} subscripts ${}_{A\,\text{or}\,B}$, depending on whether they pertain to $A$ or $B$. The estimate of Theorem \ref{thm:main} then changes to
$$
E(A,B)\ll  m^{\frac{1}{2}}_A |A||B|^{\frac{3}{2}} + \min\{M_A,M_B\}|A||B|\,, 
$$
provided that $m_A|B|\leq p^2$ in positive characteristic. 
Moreover, if $\mathbb{F}=\mathbb{F}_p$ one can dispense with the $p$-constraint, by using the asymptotic version of Theorem \ref{thm:pointplane}, estimate \eqref{pp_ass}. This would add to the right-hand side of the latter estimate a (crudely estimated) term $\frac{m_A|A||B|^2}{p}.$

 What is the correct bound for $E(A)$? The example $A=C\times D$, where $C$ is a geometric progression and $D$ an arithmetic one, with $|D|>|C|$ shows, in view of the forthcoming equations \eqref{eneqs}, that in this case $E(A)\gg M|A|^2$. Note that owing to energy quadruples $(g,h,u,v)$ lying in the same (left) coset of some torus, the upper bound on $E(A)$ should exceed the number of collinear point triples in $A$. We carefully conjecture that $M|A|^2$ may be the correct asymptotics, possibly with additional subpolynomial factors in $|A|$, although we do not have evidence for their presence. Something similar has been conjectured for the concept of bisector energy in \cite{LSdZ}. There are other parallels between bisector energy and our current considerations; see the appendix for more discussion. 

However, the key Lemma \ref{lem:Cfixed} used in the proof of Theorem \ref{thm:main} is itself sharp, and so is  estimate \eqref{triv} that combined with the latter lemma finishes the proof of Theorem \ref{thm:main}. See the forthcoming remark after the proof of Theorem~\ref{thm:main}.

The proof of Theorem \ref{thm:main} is geometric, relying on incidence theory. After an application of Cauchy-Schwarz in the form of \eqref{CSbasic}, it enables an estimate on growth in $\emph{Aff}(\mathbb{F}_p)$ which is much stronger than its algebra-based predecessors in \cite[Proposition 4.8]{HaH}, \cite[Theorem 27]{Brendan_rich} and even the geometric \cite[Theorem 5]{RuSh} which took advantage of the  (sharp) estimate on the number of directions, determined by a point set in $\mathbb{F}_p^2$ by Sz\H onyi \cite{Sz}.\footnote{The conditions of Theorem \ref{thm:main} and Corollary \ref{c:main} are identical. They may appear different as one statement uses geometric and another uses algebraic language.}

\begin{corollary} \label{c:main}
Let $A\subset \emph{Aff}(\mathbb{F}_p)$ have no more than $M$ elements in a coset of a torus and no more than $m$ elements in a coset of $U$. Suppose $m|A|\leq p^2$. Then
\[
\min\{|AA|,|A^{-1}A|\} \gg  m^{-1/2}|A|^{3/2} +  M^{-1}|A|^2\,.
\]
In particular, if $|AA|=K|A|,$ then $A$ has $\gg |A|/K^2$ elements in a coset of the unipotent subgroup or $\gg |A|/K$ elements in a coset of a torus.
\end{corollary}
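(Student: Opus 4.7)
The plan is to derive Corollary \ref{c:main} as an essentially immediate consequence of Theorem \ref{thm:main} combined with the Cauchy--Schwarz bounds \eqref{CSbasic}; no new incidence input is required.

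The first ingredient is the geometric/algebraic dictionary already flagged in the footnote. Cosets of $U$ are literally the vertical lines, so the parameter $m$ carries over unchanged. For tori, a short direct computation shows that the left coset $hT(x)$ is the set $\{(g_1,g_2):g_2=-xg_1+h_1x+h_2\}$, that is, the line of slope $-x$ and intercept $h(x)=h_1x+h_2$; as $(x,h)$ varies this sweeps out every non-vertical line. A mirror computation identifies right cosets of $T(x)$ with the non-vertical lines through $(0,x)$. Hence the hypothesis that no torus coset contains more than $M$ elements of $A$ is the same as the Theorem~\ref{thm:main} hypothesis that no (non-vertical) line meets $A$ in more than $M$ points, and the $p$-constraint $m|A|\leq p^2$ is also unchanged.

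With the hypotheses aligned, Theorem~\ref{thm:main} delivers $\max\{E(A),E^*(A)\}\ll m^{1/2}|A|^{5/2}+M|A|^{2}$. Pairing this with \eqref{CSbasic}, namely $|A^{-1}A|\geq|A|^{4}/E(A)$ and $|AA|\geq|A|^{4}/E^*(A)$, I obtain
\[
\min\{|AA|,|A^{-1}A|\}\;\geq\;\frac{|A|^{4}}{\max\{E(A),E^*(A)\}}\;\gg\;\frac{|A|^{4}}{m^{1/2}|A|^{5/2}+M|A|^{2}}\;\asymp\;\min\!\left\{m^{-1/2}|A|^{3/2},\,M^{-1}|A|^{2}\right\}.
\]
The ``$+$'' in the corollary encodes this alternative: at least one of the two summands on the right-hand side provides the actual lower bound on $\min\{|AA|,|A^{-1}A|\}$, according to which term dominates the energy estimate.

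For the ``in particular'' clause I would set $|AA|=K|A|$, whence $E^*(A)\geq|A|^{3}/K$ by \eqref{CSbasic}. Comparing with the upper bound on $E^*(A)$, whichever summand $m^{1/2}|A|^{5/2}$ or $M|A|^{2}$ dominates dictates the conclusion: in the first case $|A|^{3}/K\ll m^{1/2}|A|^{5/2}$ rearranges to $m\gg|A|/K^{2}$, exhibiting a coset of $U$ containing $\gg|A|/K^{2}$ elements of $A$; in the second case $|A|^{3}/K\ll M|A|^{2}$ rearranges to $M\gg|A|/K$, exhibiting a torus coset containing $\gg|A|/K$ elements of $A$. The only potential pitfall is keeping the coset/line dictionary straight, particularly ensuring both left and right cosets of tori are accounted for; once that is done, the rest is a one-line algebraic rearrangement and I do not anticipate any further obstacles.
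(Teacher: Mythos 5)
Your proposal is correct and follows exactly the route the paper intends: translate the coset conditions into the line conditions of Theorem \ref{thm:main} (as the paper's footnote notes, the hypotheses are identical), then divide $|A|^4$ by the energy bound via \eqref{CSbasic}. Your reading of the ``$+$'' on the right-hand side as really asserting the reciprocal-sum (i.e.\ minimum) bound is the correct one, as confirmed by the ``or'' in the paper's ``in particular'' clause, and your derivation of that clause matches the intended argument.
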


Theorem \ref{thm:main} enables us to establish the following quantitative version of Theorem \ref{t:Elekes} over general fields.

\begin{theorem} 
Let $n,k$ be positive integers, $A \subset \mathbb F$ with $|A|=n$ and suppose there are $k$ non-horizontal and non-vertical lines intersecting $A\times A$ in at least $\alpha n$ points for some $n^{-\frac{1}{2}}\ll \alpha\leq 1$. In positive characteristic $p$ also assume $$
p\gg \max\{k,\alpha^{-2}n\}\,.$$
Then for $C=12$ if $\mathbb{F}=\mathbb{R}$ and $C=16$ if 
	$\mathbb{F}=\mathbb{F}_p$, we have either\\
	$\bullet~$ at least $\gg \alpha^{C} n^{-2}k^{3}$ of these lines are parallel, and moreover $E^{+} (A) \gg \alpha^{2+C} k^3$; or  \\
	$\bullet~$ at least $\gg \alpha^{C/2} n^{-1}k^2$ of these lines are incident to a common point, and
	moreover there is $s\in \mathbb{F}$ such that  $E^{\times} (A-s) \gg \alpha^{2+\frac{C}{2}}nk^2$.
   
	\label{t1:Elekes}\end{theorem}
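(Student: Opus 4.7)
The plan is to combine the incidence bound from Theorem \ref{t:Elekes_new} (to manufacture energy out of incidences) with Theorem \ref{thm:main} (to convert energy into combinatorial structure in the affine group), precisely the strategy the introduction anticipates. View the $k$ rich lines as a subset $L\subset\emph{Aff}(\mathbb{F})$ with $|L|=k$; the quantities $m$ and $M$ from Theorem \ref{thm:main} then correspond, respectively, to the maximum number of parallel lines in $L$ and the maximum number of lines of $L$ through any common point.

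The first step combines the trivial lower bound $I(A\times A,L)\ge \alpha n k$ with the upper bound from Theorem \ref{t:Elekes_new} applied with $S=T=A$. Over $\mathbb{R}$ the hypothesis $\alpha\gg n^{-1/2}$ is exactly what absorbs the trailing term $n^{1/2}k$, so the main term $n^{7/6}E^{1/6}(L)k^{1/3}$ must beat $\alpha n k$; rearranging gives $E(L)\gg \alpha^{6}n^{-1}k^{4}$. Over $\mathbb{F}_p$ the same manoeuvre using \eqref{f:Elekes_new_Fp} produces $E(L)\gg \alpha^{8}n^{-1}k^{4}$, where the hypothesis $p\gg \alpha^{-2}n$ is precisely what absorbs the $p$-dependent error $n^{1/2}k\sqrt{\max\{1,n^{2}/p\}}$. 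Twice this exponent of $\alpha$ will be the constant $C$. The second step then applies Theorem \ref{thm:main} directly to $L$, giving $E(L)\ll m^{1/2}k^{5/2}+Mk^{2}$ (the constraint $mk\le p^{2}$ being automatic from $m\le k\le p$). Comparing with the lower bound just obtained, either $m\gg \alpha^{C}n^{-2}k^{3}$ (parallel-line conclusion) or $M\gg \alpha^{C/2}n^{-1}k^{2}$ (concurrent-line conclusion).

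It remains to upgrade each structural conclusion to the stated arithmetic energy bound. In the parallel case, fix the common slope $\mu$ of the $m$ rich lines and set $r_{\mu}(b)=|\{(x,y)\in A^{2}:y-\mu x=b\}|$; since each of the $m$ distinct rich intercepts satisfies $r_{\mu}(b_i)\ge\alpha n$, one has $\sum_b r_{\mu}(b)^{2}\ge \alpha^{2}n^{2}m$, and rewriting the left side as $\sum_d r_{A-A}(d)\,r_{A-A}(\mu d)$ and applying Cauchy--Schwarz bounds it by $E^{+}(A)$, so $E^{+}(A)\gg \alpha^{2+C}k^{3}$. In the concurrent case, translate the common intersection point $(s_{0},t_{0})$ to the origin; the $M$ rich lines become lines of distinct slopes $\mu_i$ through $0$, each meeting $(A-s_{0})\times(A-t_{0})$ in $\ge\alpha n$ points, so the number of pairs $(a,b)\in ((A-s_{0})\setminus\{0\})\times(A-t_{0})$ with $b/a=\mu_i$ is $\ge\alpha n$. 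Summing squared representations of the relation $b/a=\mu$ yields $\ge\alpha^{2}n^{2}M$ for the asymmetric multiplicative energy of $(A-s_{0})$ and $(A-t_{0})$, and a second Cauchy--Schwarz on division representation functions forces $\max(E^{\times}(A-s_{0}),E^{\times}(A-t_{0}))\gg \alpha^{2+C/2}nk^{2}$; one takes $s$ to be whichever of $s_{0}, t_{0}$ realises the maximum.

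The main obstacle I anticipate is not conceptual but the bookkeeping in positive characteristic: one has to verify that the stated hypothesis $p\gg \max\{k,\alpha^{-2}n\}$ is exactly what both absorbs the characteristic-$p$ error term in Theorem \ref{t:Elekes_new} and validates the $mk\le p^{2}$ constraint in Theorem \ref{thm:main}, and to keep track of which constants propagate where. Beyond this, the proof is the clean two-step scheme \textbf{incidence $\Rightarrow$ energy $\Rightarrow$ structure}, followed by a short Cauchy--Schwarz turning the structural conclusion into $E^{+}$ or $E^{\times}$ of an appropriate shift of $A$.
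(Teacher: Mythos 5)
Your proposal is correct and follows essentially the same route as the paper: combine the lower bound $I(A\times A,L)\ge \alpha nk$ with Theorem \ref{t:Elekes_new} and Theorem \ref{thm:main} (the paper substitutes the energy bound directly into the incidence bound, whereas you first isolate $E(L)\gg\alpha^{6}n^{-1}k^{4}$, resp.\ $\alpha^{8}n^{-1}k^{4}$, which is the same computation), and then upgrade the parallel/concurrent conclusion to $E^{+}$ and $E^{\times}$ bounds by the same Cauchy--Schwarz argument on representation functions, including the same handling of the $p$-constraints. No gaps.
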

	
\subsection*{Two applications in line geometry}

\subsubsection*{Shadow of a point set on two lines} Theorem \ref{t1:Elekes}  gives an affirmative answer (with a quantitative estimate) to a conjecture stated in \cite[Conjecture 8]{RNW} -- a question that had earlier been raised by Yufei Zhao. As a matter of fact, it follows from the proof below that the positive qualitative answer was implicit by the Elekes' Theorem \ref{t:Elekes}. We give a statement over the reals, although it applies to a general $\mathbb{F}$, given that $P$ defines $\gg|P|^2$ lines. 

Given a planar point set $P$ and a line $l$ not incident to $P$, we call the {\em shadow} of $P$ on $l$ the set of points where $l$ meets the lines from $L(P)$ -- the set of lines, generated by pairs of points in $P$. Abusing notation slightly, we write $ L(P) \cap l$ for the shadow of $P$ on $l$.

\begin{corollary} \label{thm:lineintersection}
Let $P \subset \mathbb{R}\mathbb{P}^2$ be a finite, sufficiently large set of points, and let $l_1$ and $l_2$ be two distinct lines such that $\ll |P|^{1-\delta_1}$ points of $P$ lie on any line passing through the intersection of $l_1$ and $l_2$, and $\ll|P|^{1-\delta_2}$ points of $P$ lie on any other line, for some $\delta_1, \delta_2 > 0$. Then 
$$\max\{ |L(P) \cap l_1| , |L(P) \cap l_2| \} \gg  \min\{|P|^{1 + \frac{\delta_1}{14}},|P|^{1 + \frac{\delta_2}{7}} \}.$$
\end{corollary}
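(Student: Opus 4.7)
The plan is to combine the sharp incidence estimate of Theorem~\ref{t:Elekes_new} with the affine-group energy bound of Theorem~\ref{thm:main}, using projective duality between $P$ and the lines of $L(P)$ to bridge the two.

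First I would normalise coordinates projectively so that $l_1$ is the line at infinity and $l_2$ is the $y$-axis; then $O=l_1\cap l_2$ is the vertical point at infinity, and the lines through $O$ are precisely the vertical affine lines. With $n:=|P|$, the hypothesis reads: every vertical line meets $P$ in $\ll n^{1-\delta_1}$ points and every non-vertical line in $\ll n^{1-\delta_2}$ points. Write $S_1:=L(P)\cap l_1$ (the set of slopes appearing in $L(P)$), $S_2:=L(P)\cap l_2$ (the set of $y$-intercepts), and $N:=\max\{|S_1|,|S_2|\}$. Beck's theorem (applicable thanks to the hypothesis) gives $|L(P)|\gg n^2$; the $\ll n^{1-\delta_1}$ points of $P$ lying on $l_2$ can be discarded.

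The core step is a duality realisation. Each non-vertical $\ell\in L(P)$ with equation $y=a_\ell x+b_\ell$ becomes the point $(a_\ell,b_\ell)\in S_1\times S_2$, and each $p=(p_x,p_y)\in P$ with $p_x\neq 0$ becomes the affine transformation $\mathcal{L}_p:b=-p_x a+p_y$; the incidence $p\in\ell$ then becomes $(a_\ell,b_\ell)\in\mathcal{L}_p$. Set $\mathcal{A}:=\{\mathcal{L}_p:p\in P,\,p_x\neq 0\}\subset\Aff(\R)$, which has size $n-O(n^{1-\delta_1})$. Summing line-by-line and discarding lower-order terms (from vertical lines of $L(P)$ and points of $P$ on $l_2$) produces at least $\sum_{\ell\in L(P)}|\ell\cap P|\geq 2|L(P)|\gg n^2$ incidences between these points and lines. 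Applying the real case of Theorem~\ref{t:Elekes_new} with $S=S_1$, $T=S_2$, $A=\mathcal{A}$, and observing that the linear error term $N^{1/2}n$ is dominated unless $N\gg n$ (in which case the conclusion is trivial), I obtain
$$E(\mathcal{A})\gg n^{10}/N^{7}.$$

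Under the same duality, $\mathcal{A}$ is identified with $\{(-p_x,p_y):p\in P,\,p_x\neq 0\}$. A vertical line (fixed slope $-p_x$) in this Aff-plane pulls back to a vertical line in the original plane, so the parameter $m$ of Theorem~\ref{thm:main} satisfies $m\ll n^{1-\delta_1}$; a non-vertical line (equation $p_y=-\alpha p_x+\beta$) pulls back to a non-vertical line in the original plane, so $M\ll n^{1-\min\{\delta_1,\delta_2\}}$. Theorem~\ref{thm:main} then gives
$$E(\mathcal{A})\ll m^{1/2}|\mathcal{A}|^{5/2}+M|\mathcal{A}|^2\ll n^{3-\delta_1/2}+n^{3-\delta_2}.$$
Equating the upper and lower bounds on $E(\mathcal{A})$ yields $N^{7}\gg n^{7+\delta_1/2}$ or $N^{7}\gg n^{7+\delta_2}$, i.e.\ $N\gg\min\{n^{1+\delta_1/14},\,n^{1+\delta_2/7}\}$, as desired.

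The main obstacle I expect is the precise verification that the duality correctly translates line-richness hypotheses on $P$ into the parameters $m,M$ for $\mathcal{A}$: specifically, that vertical lines in the Aff-plane of $\mathcal{A}$ correspond exactly to the pencil through $O$ in the original plane, and non-vertical Aff-lines to lines not through $O$. Everything else is routine accounting of lower-order terms arising from exceptional cases (vertical lines of $L(P)$, points of $P$ on $l_2$).
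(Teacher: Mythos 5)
Your proposal is correct and follows essentially the same route as the paper: project $\{l_1,l_2\}$ to the axis/line-at-infinity pair, dualise so that $P$ becomes a set of lines incident to the grid $(L(P)\cap l_1)\times(L(P)\cap l_2)$, lower-bound the incidence count by $|L(P)|\gg|P|^2$ via Beck, and play Theorem~\ref{t:Elekes_new} against the energy bound of Theorem~\ref{thm:main} with $m\ll|P|^{1-\delta_1}$ and $M\ll|P|^{1-\min\{\delta_1,\delta_2\}}$ (your bookkeeping for $M$ is in fact slightly more careful than the paper's, and harmless since the $M|A|^2$ term is dominated either way). The only cosmetic differences are the swapped roles of $l_1,l_2$ under the projective map and that you should also discard the $\ll|P|^{1-\delta_1}$ points of $P$ on $l_1$, not just those on $l_2$.
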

Roughly, a sufficiently non-collinear point set $P$ can have a small shadow on at most one line.

This question is partially motivated by the sum-product problem. Indeed, if we take the point set to be $P=\{(a,a^2) : a \in A\subset \mathbb{R}\}$, $l_1$ to be the line at infinity and $l_2$ to be the $y$-axis, Corollary \ref{thm:lineintersection}, used as a black box implies that
\[
|A+A| + |AA| \gg |A|^{1+c}\,,
\]
with $c=\frac{1}{14}$. 
This implication is true for sufficiently small sets in positive characteristic as well, for the role of $\mathbb{R}$ in the proof of Corollary \ref{thm:lineintersection} is merely to guarantee that $|L(P)|\gg |P|^2$. But in fact, if $P$ is a parabola as considered above, it's easy to show that (as a set of affine transformations) $E(P)\ll|P|^2$, and following the argument in the proof of Corollary \ref{thm:lineintersection}, one gets $c=\frac{1}{7}$ in characteristic zero and $c=\frac{1}{9}$ in positive characteristic.

A special case of Corollary \ref{thm:lineintersection} was proven in \cite{RNW} and used to prove that
\[
\left| \left \{ \frac{ab-cd}{a-c} : a,b,c,d \in A \right \} \right | \gg |A|^{2+\frac{1}{14}},
\]
for any finite $A \subset \mathbb R$. By making suitable choices for the point set and the two fixed lines, one can also recover quantitatively weaker versions of several well-known sum-product results, such as lower bounds on $\max \{|A+A|,|A^{-1}+A^{-1}|\}$ and $\max \{|AA|, |(A+1)(A+1)|\}$.

We use Corollary \ref{thm:lineintersection} to point out another curious fact, concerning the shadow of the Cartesian product $A \times A$ on a single line.
\begin{corollary}\label{cor:cartesianintersection}
Let $A \subset \mathbb{R}$ be a finite set, and $P = A \times A$. Let $l$ be any affine line not of the form $y=x$ or $y = -x + k$ for some $k$. Then we have
$$|L(A \times A) \cap l| \gg |A|^{2 + \frac{1}{14}}.$$
\end{corollary}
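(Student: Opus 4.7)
The plan is to apply Corollary~\ref{thm:lineintersection} to $P = A \times A$ with $l_1 = l$ and with $l_2$ chosen by exploiting the intrinsic symmetry of the Cartesian product. The key observation is that the reflection $\tau\colon (x,y) \mapsto (y,x)$ across the line $y=x$ preserves $P$ setwise, hence permutes the line set $L(P)$. Consequently, for every affine line $\ell$ we have $|L(P)\cap \ell| = |L(P)\cap \tau(\ell)|$.

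A direct calculation shows that a line $\ell$ is fixed setwise by $\tau$ if and only if $\ell = \{y=x\}$ or $\ell$ has slope $-1$, i.e. $\ell = \{y=-x+k\}$ for some $k \in \mathbb{R}$. Since $l$ is assumed to be of neither form, $\tau(l)\ne l$, so the choice $l_1=l$, $l_2=\tau(l)$ yields two distinct lines, as required by Corollary~\ref{thm:lineintersection}.

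To verify the remaining hypotheses, note that $|P|=|A|^2$ and that every affine line meets $A\times A$ in at most $|A|=|P|^{1/2}$ points. We may therefore take $\delta_1=\delta_2=\tfrac{1}{2}$: the condition that no more than $\ll|P|^{1-\delta_i}$ points of $P$ lie on any line holds unconditionally for $P=A\times A$, and in particular for every line through $l\cap\tau(l)$. Corollary~\ref{thm:lineintersection} then gives
\[
\max\{|L(P)\cap l|,\; |L(P)\cap \tau(l)|\} \;\gg\; \min\{|P|^{1+1/28},\; |P|^{1+1/14}\} \;=\; |A|^{2+1/14}.
\]
The $\tau$-symmetry forces the two terms inside the $\max$ to be equal, so the desired bound $|L(A\times A)\cap l|\gg |A|^{2+1/14}$ follows.

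I do not anticipate a real obstacle: the whole argument is a symmetrisation trick that reduces a statement about a single line to the two-line bound already provided by Corollary~\ref{thm:lineintersection}. The only point requiring a moment of care is recognising that the lines excluded in the hypothesis, $\{y=x\}$ and $\{y=-x+k\}$, are precisely those on which $\tau$ acts trivially (the axis and the pencil of lines orthogonal to it), so that the symmetry-based choice of $l_2$ is available exactly when the statement is supposed to apply.
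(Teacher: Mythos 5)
Your argument is correct and is essentially identical to the paper's own proof: the paper also reflects $l$ in the line $y=x$, notes that the excluded lines are exactly the fixed lines of this reflection, and applies Corollary~\ref{thm:lineintersection} with $\delta_1=\delta_2=\tfrac12$ to the pair $l$, $\gamma(l)$, using the symmetry of $A\times A$ to equate the two shadow sizes. The exponent bookkeeping ($|P|^{1+1/28}=|A|^{2+1/14}$) also matches.
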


\subsubsection*{Quadrangles, rooted on two lines} The other application is as follows. Consider a planar point set $P$, not incident to the $y$-axis. What is the maximum number of non-trivial quadrangles $(g,h,u,v)$ with vertices in $P$, so that the two lines connecting $g,h$ and $u,v$ are parallel, i.e., meet the line at infinity at the same point, and the two lines, connecting $g,u$ and $h,v$ have the same $y$-intercept? By non-trivial, we mean here that the four points $g,h,u,v$ do not lie on the same line. The question and the forthcoming answer applies to any two lines  $l_1$, $l_2$, not incident to the point set $P$. Let $\mathcal Q(P)$ be the set of quadruples in question.


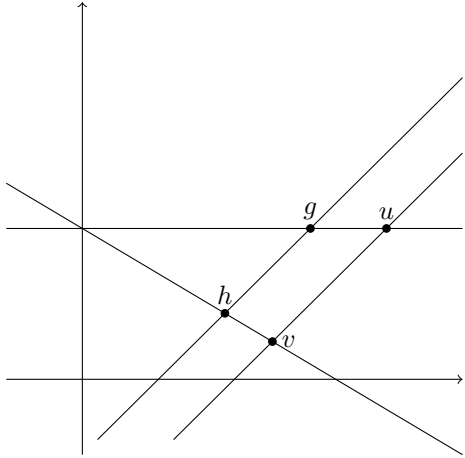
\begin{figure}[h!]
     \centering
     \begin{tikzpicture}
\draw [->] (0,-1) -- (0,5);
\draw [->] (-1,0) -- (5,0);

\draw (0.2,-0.8) -- (5,4);
\draw (1.2,-0.8) -- (5,3);
\draw (-1,2) -- (5,2);
\draw (-1,2.6) -- (5,-1);

\draw[fill] (3,2) circle [radius=0.05];
\draw[fill] (4,2) circle [radius=0.05];
\draw[fill] (1.875,0.875) circle [radius=0.05];
\draw[fill] (2.5,0.5) circle [radius=0.05];

\node[above] at (3,2) {$g$};
\node[above] at (4,2) {$u$};
\node[above] at (1.875,0.875) {$h$};
\node[right] at (2.5,0.5) {$v$};

\end{tikzpicture}
     \caption{An example of a non-degenerate quadrangle $(g,h,u,v) \in \mathcal Q(P)$}
     \label{fig:my_label}
 \end{figure}
 
 Because $|Q(P)| \leq E(P)$, we have the following corollary of Theorem \ref{thm:main}. In fact, the two quantities $|\mathcal Q(P)|$ and $E(P)$ are equal, modulo collinear energy quadruples.

\begin{corollary}\label{quadrangles} Let $P\subset \mathbb F^2$ such that no line contains more than $M$ points of $P$, and no vertical line contains more than $m$ points of $P$. Also, suppose in positive characteristic $p$ one has  $m|P|\leq p^2$. Then
$|\mathcal Q(P)| \leq m^{\frac{1}{2}}|P|^{\frac{5}{2}} +  M|P|^2 .$
\end{corollary}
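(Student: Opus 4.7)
The plan is to derive the bound directly from Theorem~\ref{thm:main} via the observation from the introduction that $|\mathcal Q(P)|\leq E(P)$.

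First, identify each point $g=(g_1,g_2)\in P$ with the affine transformation $x\mapsto g_1 x+g_2$. Since at most $m$ points of $P$ lie on the $y$-axis and may be discarded without affecting the conclusion, this exhibits $P$ as a finite subset of $\emph{Aff}(\mathbb F)$ satisfying the hypotheses of Theorem~\ref{thm:main} with the same $M$ and $m$; Theorem~\ref{thm:main} then gives $E(P)\ll m^{1/2}|P|^{5/2}+M|P|^2$. All that remains is the reduction $|\mathcal Q(P)|\leq E(P)$, up to a small error from collinear quadruples that is already absorbed in the right-hand side.

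To prove that reduction, set $t=g^{-1}\circ h$ and $s=u^{-1}\circ v$. Per the coset description at the start of the paper, left cosets of the torus $T(x_0)=\mathrm{Stab}(x_0)$ are precisely the lines of slope $-x_0$ in the $(g_1,g_2)$-plane, while right cosets of $T(x_0')$ are precisely the lines through the point $(0,x_0')$ on the $y$-axis. The parallelism of the lines $gh$ and $uv$ therefore amounts to $t$ and $s$ lying in the same torus $T(x_0)$, where $-x_0$ is the common slope. Using the algebraic identity $g^{-1}\circ h = u^{-1}\circ v \Longleftrightarrow g\circ u^{-1} = h\circ v^{-1}$, the condition that lines $gu$ and $hv$ share a $y$-intercept $x_0'$ translates, via the right-coset description, into $g\circ u^{-1}$ and $h\circ v^{-1}$ both lying in $T(x_0')$. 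A short direct computation using the parameterization $t_2=x_0(1-t_1)$ for $t\in T(x_0)$ shows that these two torus coincidences together force the full equality $t=s$, except in the degenerate case when three of the four points are collinear; the number of such degenerate quadruples is at most the number of ordered collinear triples in $P$ (the fourth point being then determined by the two defining equations), which is $\ll M|P|^2$ and hence absorbed in the desired bound.

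The principal obstacle is essentially bookkeeping: verifying the elementary algebraic equivalence between the two geometric conditions and the single group equation in the generic case, and showing the degenerate contribution is absorbed on the right. Once the coset picture from the introduction is in place, each of these reduces to a routine calculation with affine transformations.
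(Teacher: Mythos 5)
Your proposal is correct and follows essentially the same route as the paper: both reduce the corollary to the inclusion $\mathcal Q(P)\subseteq Q(P)$ modulo collinear quadruples --- translating parallelism and equal $y$-intercepts into left/right coset (torus) membership and using that distinct tori meet only at the identity --- and then invoke Theorem~\ref{thm:main} via $|\mathcal Q(P)|\le E(P)$. The only differences are cosmetic: the paper carries out the key step with abstract coset representatives whereas you do it by direct coordinate computation, and your extra $M|P|^2$ allowance for degenerate quadruples is in fact unnecessary, since the degeneracy forces all four points onto a single line, a case excluded from $\mathcal Q(P)$ by definition.
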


We remark that if $P$ is an arithmetic or geometric progression cross itself, then it can be seen that $|\mathcal Q(P)|\gg |P|^{\frac{5}{2}}.$ In the special case when $P$ does not have more than $\sqrt{|P|}$ collinear points, Corollary \ref{quadrangles} yields $|\mathcal Q(P)|\ll |P|^{\frac{5}{2}+\frac{1}{4}}.$ Note that, over the reals and with the condition that no more than $\sqrt {|P|}$ are collinear, one has the bound $\ll |P|^3\log |P|$ for the number of quadruples $(g,h,u,v)$ such that the direction from $g$ to $h$ is equal to that from $u$ to $v$.
 See \cite[Theorem 5]{RuS}. After a projective transformation, the same bound holds for the number of quadruples $(g,h,u,v)$ such that the line connecting $g$ and $u$ has the same $y$-intercept as the line connecting $h$ and $v$. In view of the above lower bound, the events of the lines along pairs of opposite sides of the quadrangles in $\mathcal Q(P)$ meeting the lines $l_1$ and $l_2$ at the same point on the latter lines cannot be regarded as independent.

\section{Proofs}

 \subsection[Proof of Theorem 2.1]{Proof of Theorem \ref{thm:main}}
We adopt the convention that for any $x \in \emph{Aff}(\mathbb{F})$, $x = (x_1,x_2)$.
 \begin{proof}
 We prove the estimate for the quantity $E(A)$, the proof for $E^*(A)$ follows from the inequality $E^*(A)\leq E(A)$ from \cite{Sh}. Alternatively the forthcoming proof has an isomorphic version with $E^*(A)$ in place of $E(A)$.
 
Set
\begin{equation}
Q(A):=\{(g,h,u,v) \in A^4 :g^{-1} \circ h = u^{-1} \circ v\},
\label{e:quad}\end{equation}
so that $E(A) = |Q(A)|$. The first lemma gives a convenient decomposition of this set.

\begin{lemma} \label{lem:decomposition}
Let $A \subset G$ be finite. Then we have
\[
E(A)=\sum_{C} |\{(g,h,u,v) \in Q(A) : g_1v_1=C=h_1u_1\}|.
\] 
\end{lemma}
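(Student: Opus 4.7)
The plan is to compute the affine group operation explicitly on first coordinates and observe that the energy equation automatically forces the identity $g_1 v_1 = h_1 u_1$, so that partitioning $Q(A)$ by the common value $C$ of these two products gives the asserted decomposition trivially (the sets on the right-hand side form a partition of $Q(A)$).

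Concretely, I would first recall that for $g=(g_1,g_2)$, acting as $g(x)=g_1 x + g_2$, the composition law is $g\circ h = (g_1 h_1,\, g_1 h_2 + g_2)$ and the inverse is $g^{-1} = (g_1^{-1},\, -g_2 / g_1)$. Then a direct calculation gives
\[
g^{-1}\circ h \;=\; \bigl(h_1/g_1,\; (h_2-g_2)/g_1\bigr), \qquad u^{-1}\circ v \;=\; \bigl(v_1/u_1,\; (v_2-u_2)/u_1\bigr).
\]
Equating the first coordinates of these two elements yields $h_1/g_1 = v_1/u_1$, which (since $g_1,u_1\in\mathbb{F}^*$) is equivalent to $g_1 v_1 = h_1 u_1$.

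Thus every quadruple $(g,h,u,v)\in Q(A)$ satisfies $g_1 v_1 = h_1 u_1$, so assigning to it the common value $C:=g_1 v_1 = h_1 u_1\in \mathbb{F}^*$ yields a well-defined partition of $Q(A)$ indexed by $C$. Summing the sizes of the parts gives $E(A) = |Q(A)| = \sum_C |\{(g,h,u,v)\in Q(A): g_1 v_1 = C = h_1 u_1\}|$, which is precisely the claim.

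There is no real obstacle here; this is a bookkeeping lemma whose content is just the observation that the ``multiplicative'' part of the energy relation in $\emph{Aff}(\mathbb{F})$ decouples from the ``additive'' part via the product $g_1 v_1 = h_1 u_1$. The only point worth being careful about is that the decomposition is genuinely a partition (not just a covering), which is immediate because the value of $C$ is determined by any single quadruple.
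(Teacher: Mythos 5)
Your proof is correct and follows essentially the same route as the paper: compute the first coordinate of $g^{-1}\circ h = u^{-1}\circ v$ to get $h_1/g_1 = v_1/u_1$, i.e.\ $g_1v_1 = h_1u_1$, and observe that this makes the right-hand side a partition of $Q(A)$. The extra care you take about the partition being determined by the quadruple is fine but, as you note, immediate.
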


\begin{proof}
 Suppose that $g^{-1} \circ h = u^{-1} \circ v$. Then
 \[
 (g_1^{-1}, g_2g_1^{-1}) \circ (h_1,h_2) =  (u_1^{-1}, u_2u_1^{-1}) \circ (v_1,v_2).
 \]
 Calculating the first coordinates then gives
 \[
 h_1g_1^{-1}=v_1u_1^{-1}
 \]
 which rearranges to $g_1v_1=h_1u_1$.
\end{proof}

Let $Q_C$ denote the set of solutions corresponding to the value $C$ in the decomposition above, that is,
\[Q_C:= |\{(g,h,u,v) \in Q(A) : g_1v_1=C=h_1u_1\}|.
\]
Also define the set 
\[
\mathcal{C}_C : = \{(g,v) \in A \times A : g_1v_1=C\}.
\]
We bound $Q_C$ by the following lemma.

\begin{lemma} \label{lem:Cfixed}
 For any $C \in \mathbb{F}^*$, under the constraint that $m|A|\leq p^2$ in positive characteristic, one has
 \[
 Q_C \ll |\mathcal C_C|^{3/2} +M|\mathcal C_C|.
 \]
 \end{lemma}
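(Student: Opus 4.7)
The plan is to reinterpret $Q_C$ as a point-plane incidence count in $\mathbb{F}^3$ and invoke Rudnev's point-plane incidence theorem (Theorem~\ref{t:pointplane}). First, I would unpack the affine arithmetic: from the action $g(x)=g_1x+g_2$ one has $g^{-1}\circ h=(h_1/g_1,(h_2-g_2)/g_1)$. Equating with $u^{-1}\circ v$ and using $g_1v_1=h_1u_1=C$ (so $v_1=C/g_1$, $u_1=C/h_1$), the $Q_C$-condition collapses to the single scalar equation
\[
C(h_2-g_2)\;=\;g_1h_1(v_2-u_2).
\]

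Since a pair $(g,v)\in\mathcal C_C$ is determined by the triple $(g_1,g_2,v_2)\in\mathbb{F}^3$, the key maneuver is the substitution $w=g_1v_2$, which turns the bilinear relation above into an affine one. With this in mind I would associate to each $(g,v)\in\mathcal C_C$ the point
\[
P(g,v):=(g_1,\,g_2,\,g_1v_2)\in\mathbb{F}^3,
\]
and to each $(h,u)\in\mathcal C_C$ the plane
\[
\pi(h,u):\quad C X_2 \,-\, h_1 u_2\, X_1 \,+\, h_1 X_3 \;=\; Ch_2
\]
in coordinates $(X_1,X_2,X_3)$. Both maps are injective (from the point one reads off $g$ and then $v_2=(g_1v_2)/g_1$; from the plane coefficients one reads off $h_1$, $u_2$, $h_2$ and hence $u_1=C/h_1$), and direct substitution confirms that $P(g,v)\in\pi(h,u)$ holds precisely when the scalar equation above does. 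Therefore $Q_C=I(P_C,\Pi_C)$ with $|P_C|=|\Pi_C|=|\mathcal C_C|$.

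It then remains to verify the hypotheses of the point-plane incidence bound. The characteristic-$p$ constraint $|P_C|\leq p^2$ follows from $|\mathcal C_C|=\sum_{v\in A}|\{g\in A:g_1=C/v_1\}|\leq m|A|\leq p^2$. For the collinearity bound on $P_C$, I would take an arbitrary line $L\subset\mathbb{F}^3$ and split on its projection to the $(X_1,X_2)$-plane: if the image is a single point $(a,b)$, then $(g_1,g_2)=(a,b)\in A$ is fixed and $v_2$ must lie in the $A$-fibre over $C/a$, contributing at most $m\leq M$ points; otherwise the image is an affine line $\ell$, which meets $A$ in at most $M$ points (at most $m$ if $\ell$ is vertical), and for each such $(g_1,g_2)$ the height $g_1v_2$ is determined by $L$, hence so is $v_2$. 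In every case $|L\cap P_C|\leq M$, and Theorem~\ref{t:pointplane} applied to $P_C,\Pi_C$ with this collinearity parameter yields $Q_C \ll |\mathcal C_C|^{3/2}+M|\mathcal C_C|$.

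The one real obstacle is the algebraic setup in the middle step: in the natural coordinates $(g_1,g_2,v_2)$ the energy relation is genuinely bilinear (the mixed term $g_1v_2$) and does not define a plane. The substitution $w=g_1v_2$ is exactly what is needed to linearise it; after this, the injectivity checks, the case analysis of collinear points, and the invocation of the point-plane bound are all routine.
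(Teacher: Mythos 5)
Your proposal is correct and follows essentially the same route as the paper: the identical point map $(g,v)\mapsto(g_1,g_2,g_1v_2)$, planes that are just the paper's planes $u_2x-u_1y-z+u_1h_2=0$ rescaled by $-h_1$, the same projection argument showing at most $M$ collinear points, and the same invocation of Rudnev's point--plane theorem with $|\mathcal C_C|\leq m|A|\leq p^2$. The only (harmless) cosmetic differences are working in affine rather than projective coordinates and observing that $Q_C$ equals, rather than is merely bounded by, the incidence count.
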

 \begin{proof}[Proof of Lemma \ref{lem:Cfixed}]
 By considering the line equation $g^{-1} \circ h = u^{-1} \circ v$ and expanding out, we have the equations 
\begin{equation}\label{eneqs} g_1v_1 = h_1u_1 = C, \qquad \frac{h_2 -g_2}{g_1} = \frac{v_2 - u_2}{u_1}.\end{equation}
 The second equation can be written as 
\begin{equation} \label{ppinc} u_2g_1  - u_1g_2 - g_1v_2+ u_1h_2 = 0\,.\end{equation}
 We therefore map $\mathcal C_C\times \mathcal C_C$ to $\mathbb{P}^3\times \mathbb{P}^{3*}$ (where $\mathbb{P}^3$ is the three-dimensional projective space over $\mathbb{F}$ and $\mathbb{P}^{3*}$ is its dual) as
 $$
 (g,v)\to (g_1:g_2:g_1v_2:1),\qquad (u,h)\to (u_2:-u_1:-1:u_1h_2)\,.
 $$
The map is injective.  Indeed, suppose we have $(g_1,g_2,g_1v_2) = (g_1',g_2',g_1'v_2')$. Then we must have $(g_1,g_2) = (g_1',g_2')$, and since $g_1v_1 = g_1'v_1' = C$, we must also have $v_1 = v_1'$. The last coordinate then gives $g_1v_2 = g_1'v_2' \implies v_2 = v_2'$. 
 
Thus equation \eqref{ppinc}  can be viewed as a point plane incidence between the set of points $$P_C = \{ (g_1,g_2,g_1v_2) : (g,v) \in \mathcal{C}_C \}$$ and the set of planes $$\Pi_C = \{ \pi_{u,h} : (u,h) \in \mathcal{C}_C \},$$ where $\pi_{u,h}$ is the plane given by $u_2x - u_1y -z + u_1h_2 =0.$ Note that we have $|P_C| = |\Pi_C| = |\mathcal{C}_C|$. 

This calls for using the point-plane incidence bound from \cite{Ru}:
 \begin{theorem}[Rudnev]\label{thm:pointplane}
 Let $\mathbb{F}$ be a field, and let $P$ and $\Pi$ be finite sets of points and planes respectively in $\mathbb{P}^3$. Suppose that $|P| \leq |\Pi|$, and that $|P| \ll p^2$ if the characteristic $p\neq 2$ of $\mathbb{F}$ is positive. Let $k$ be the maximum number of collinear points in $P$. Then the number of incidences satisfies
 $$I(P,\Pi) \ll |\Pi||P|^{1/2} + k |\Pi|$$
 \end{theorem}
 Note that if $\mathbb{F}=\mathbb{F}_p$, one can dispense with the $p$-constraint in Theorem \ref{thm:pointplane}, replacing its estimate by the following asymptotic version (see, e.g. \cite[Lemma 6]{RudSh}):
 \begin{equation}\label{pp_ass}
 I(P,\Pi) -\frac{|\Pi||P|}{p} \ll |\Pi||P|^{1/2} + k |\Pi|\,.
 \end{equation}
 
 We claim that we can set $k=M$ in our application of Theorem \ref{thm:pointplane}. Indeed, suppose that a line contains more than $M$ points from $P_C$, each of the form $(g_1,g_2,g_1v_2)$. If this line is not of the form $\{(x_0,y_0,z) : z \in \mathbb F\}$ then project using the map $\pi(x,y,z)=(x,y,0)$ to get a line in the plane which contains more than $M$ points of the form $(g_1,g_2)$. This contradicts the assumption that $A$ contains no more than $M$ points on a line.

If the line is of the form $\{(x_0,y_0,z) : z \in \mathbb F\}$ and it contains more than $M$ points from $P_C$, it follows that there are more than $M$ points $(v_1,v_2)$ in $A$, all satisfying $v_1=C/x_0$.

To conclude the proof we apply Theorem \ref{thm:pointplane} to the point-plane arrangement $P_C$ and $\Pi_C$. We note the trivial (yet easily seen to be achievable) supremum bound \begin{equation}\label{triv}
|\mathcal C_C| \leq m|A|\,,
\end{equation}
which determines the constraint in positive characteristic, and then
have
 $$|Q_C| \leq I(P_C, \Pi_C) \ll |\mathcal{C}_C|^{3/2} + M |\mathcal{C}_C|\,,$$
 as required.
 \end{proof}

The claim of Theorem \ref{thm:main} now follows immediately. Apply Lemma \ref{lem:decomposition} and bound each summand by Lemma \ref{lem:Cfixed} to get
\[
E(A) \leq \sum_C \left( |\mathcal C_C|^{3/2} +M|\mathcal C_C| \right).
\]
Applying the  $L^\infty$ bound \eqref{triv} and the $L^1$ identity
$$
\sum_C|\mathcal{C}_C|=|A|^2\,
$$
completes the proof.
\end{proof}

 Lemma \ref{lem:Cfixed} cannot be improved without additional restrictions. Indeed, consider the case $A=[n] \times [n]$.
If we take $C=q$ for some prime $q \leq n$, we have $|\mathcal C_C|=2 n^2$. We also have $M=n$ (hence $M$ is a constant multiple of $|\mathcal C_C|^{1/2}$ - the regime in which the two terms in Lemma \ref{lem:Cfixed} are of the same order of magnitude). Lemma \ref{lem:Cfixed} tells us that $Q_C \ll n^3$. On the other hand, every solution to the additive energy equation
\[
b_1+b_2=b_3+b_4 ,\,\,\,\, b_i \in \{1,\dots, n\}
\]
gives a contribution to $Q_C$, and so $Q_C \gg n^3$.

In fact, if we instead take $C$ to be any element of the product set of $[n]$, then since $C$ has $\ll n^{\epsilon}$ representations as a product we have $|\mathcal C_C| \ll n^{2+\epsilon}$, and so the bound is near-optimal for all $C$ for this choice of $A$.

Moreover, estimate \eqref{triv} alone is also sharp if one simply takes $A=C\times D$, where $C$ is a geometric progression. 

\subsection[Proof of Theorem 2.3]{Proof of Theorem \ref{t1:Elekes}}	\begin{proof}
	We apply Theorems \ref{t:Elekes_new} and \ref{thm:main}. The constraints on $\alpha$ in the statement of Theorem \ref{t1:Elekes} have been chosen to ensure dominance of the first terms in the incidence bounds \eqref{f:Elekes_new_R}, \eqref{f:Elekes_new_Fp}. Let $L$ be the set of $k$ lines described in the statement of Theorems \ref{thm:main} and  \ref{t1:Elekes}. 
	
	For $\mathbb{F}=\mathbb{R}$ an application of Theorems \ref{t:Elekes_new} and \ref{thm:main} combined with the fact that $\alpha n \geq C' n^{1/2}$ gives
	$$
	\alpha k n \leq I(A \times A, L)  \ll n^{7/6} \left( m^{1/12} k^{3/4} + M^{1/6}k^{2/3} \right)\,,
	$$
	so either $m\gg \alpha^{12} n^{-2}k^{3}$ or $M\gg \alpha^{6} n^{-1}k^2$. It follows that either $\gg \alpha^{12} n^{-2}k^{3}$ lines, as elements of  $G$, lie in the same coset of $U$, so they are parallel, or $\gg \alpha^{6} n^{-1}k^2$ lines lie in the same coset of some torus, so they are concurrent.
	
	Similarly, for $\mathbb{F}=\mathbb{F}_p$, Theorems \ref{t:Elekes_new} and \ref{thm:main} combined (the condition $k \leq p$ ensures that $mk \leq p^2$) with the fact that $\alpha n \geq C' n^{1/2}$ and $p \geq \alpha^{-2} n$ give
	$$
	\alpha k n \ll n^{9/8} \left( m^{1/16} k^{13/16} + M^{1/8}k^{3/4} \right)\,,
	$$
	hence either $\gg \alpha^{16} n^{-2}k^3$ lines are parallel or $\gg \alpha^{8} n^{-1}k^2$ lines are concurrent.  
	
	To address the second claim of the first bullet point, parameterise the set of parallel lines by 
	 by $(\gamma, \beta)$, where $\gamma\neq 0$ is the fixed slope, and denote by $B \subseteq \mathbb{F}_p$ the set of all such $\beta$. By the assumption that we are in the case of the first bullet point, we have
	 $$
	 |B|\gg \alpha^{C} n^{-2}k^{3}\,,
	 $$
	 where $C=12$ or $C=16.$
	Hence, applying Cauchy-Schwarz twice, one has (with $r_{F(A)}(\beta)$ standing for the number of realisations of $\beta$ as a value of a function $F$, with several variables in $A$, and $A(x)$ being the indicator function of $A$):
	\begin{equation}\label{f:mixed_AS}\begin{aligned}
	\alpha n |B| & \le  \sum_{\beta \in B} \sum_{x \in A} A(\gamma x +\beta) = \sum_{\beta \in B} r_{A-\gamma A} (\beta) \le \sqrt{|B|} \left(\sum_{\beta \in B} r^2_{A-\gamma A} (\beta) \right)^{1/2} \\ & 
	\leq \sqrt{|B|} \sqrt{E^+(A,\gamma A)} \leq \sqrt{|B|} \sqrt{E^+(A)}\,,\end{aligned}
	\end{equation}
	proving the second claim of the first bullet point.

	Similarly in the case of $\gg\alpha^{C/2} n^{-1}k^2$ lines being concurrent at some point $(x_0,y_0)$, we parameterise them by their slopes, once again denoted as $\beta = \frac{y-y_0}{x-x_0}$ in a set $B$, with $|B|\gg\alpha^{C/2} n^{-1}k^2$.
	
	As in \eqref{f:mixed_AS} above, we have 
	$$
	\alpha n |B|  \le \sum_{\beta \in B} r_{\frac{A-y_0}{A-x_0}} (\beta)
	\le \sqrt{|B|} ({E^\times} (A-x_0){ E^\times} (A-y_0))^{\frac{1}{4}}\,,
	$$
	proving the second claim of the second bullet point.

	\end{proof}

\subsection[Proof of Corollaries 2.4, 2.5, 2.6]{Proof of Corollaries \ref{thm:lineintersection}, \ref{cor:cartesianintersection}, \ref{quadrangles}}
In this section we first apply Theorem \ref{thm:main} to prove Corollary \ref{thm:lineintersection}, whence we deduce Corollary \ref{cor:cartesianintersection}. At the end we prove Corollary \ref{quadrangles}.

\begin{proof}[Proof of Corollary \ref{thm:lineintersection}]
Let $l_1$, $l_2$ be distinct arbitrary lines, and let $P$ be a set of points in the plane with no more than $|P|^{1-\delta_1}$ points of $P$ on a line through the intersection of $l_1$ and $l_2$, and no more than $|P|^{1-\delta_2}$ points on any other line. By assumption, no more than $|P|^{1- \delta_1}$ points lie on $l_1$ or $l_2$, so we may remove such points and be left with a positive proportion of $P$. We apply a projective transformation $\pi$ to send $l_1$ to the $y$-axis $l_y$, and $l_2$ to the line at infinity $l_{\infty}$. Defining $P' := \pi(P)$, we have 
$$|L(P) \cap l_1| = |L(P') \cap l_y|, \quad |L(P) \cap l_2| = |L(P') \cap l_{\infty}|$$
which follows since projective transformations preserve incidence structure. Let $T$ denote the set $L(P') \cap l_y$ and $S$ denote the set $L(P') \cap l_{\infty}$. Note that lines through the intersection of $l_1$ and $l_2$ have been mapped to vertical lines. We claim that we have 
$$I(P', L(P')) \leq I(S \times T, P')$$
where on the right side $P'$ is being considered as a subset of the affine plane $\emph{Aff}(\mathbb{R})$ corresponding to a set of lines. The claim follows by taking a point $(p_1,p_2) \in P'$ lying on some line $y = sx + t$ coming from $L(P')$. By definition we must have $(s,t) \in S \times T$, and therefore this incidence may be viewed as the point $(s,t) \in S \times T$ lying on the line $y = -p_1 x + p_2$, proving the claim.

We bound these incidences using Theorem \ref{t:Elekes_new}, applied to the point set $S \times T$ and the line set $P'$ (strictly we are applying it to the line set given by $(-p_1,p_2)$ such that $(p_1,p_2) \in P'$, but this changes nothing with respect to the collinearity conditions). The lines in $P'$ are non-vertical and non-horizontal since we removed any points lying on $l_1$ or $l_2$ in $P$, which implies that $P'$ has only affine points lying off the $y$ axis. Furthermore, $P'$ has no more than $|P|^{1-\delta_1}$ points on any vertical line, and no more than $|P|^{1-\delta_2}$ points on any other line. We may bound the energy $E(P')$ using Theorem \ref{thm:main}. Plugging in $m \ll |P'|^{1-\delta_1}$, $M \ll |P'|^{1- \delta_2}$, we have $E(P') \ll |P'|^{3 - \frac{\delta_1}{2}} + |P'|^{3 - \delta_2}$, implying that 
\begin{align*}
    I(S \times T, P') &\ll |T|^{1/2} |S|^{2/3} E(P')^{1/6} |P'|^{1/3} + |T|^{1/2}|P'| \\
    & \ll |T|^{1/2}|S|^{2/3}\left(|P'|^{3 - \frac{\delta_1}{2}} + |P'|^{3 - \delta_2}\right)^{1/6}|P'|^{1/3} + |T|^{1/2}|P'| \\
    & \ll |T|^{1/2}|S|^{2/3}|P'|^{\frac{5}{6} - \frac{\delta_1}{12}} + |T|^{1/2}|S|^{2/3}|P'|^{\frac{5}{6} - \frac{\delta_2}{6}} + |T|^{1/2}|P'|.
\end{align*}
Since each line in $L(P')$ intersects $P'$ in at least two places and by Beck's theorem \cite{Beck} we have $|L(P')| \gg |P'|^2$, we may bound below by
$$I(S \times T, P') \geq I(P', L(P')) \geq |L(P')| \gg |P'|^2.$$
Putting this all together, we have
$$|P'|^2 \ll |T|^{1/2}|S|^{2/3}|P'|^{\frac{5}{6} - \frac{\delta_1}{12}} + |T|^{1/2}|S|^{2/3}|P'|^{\frac{5}{6} - \frac{\delta_2}{6}} + |T|^{1/2}|P'|.$$
If the third term dominates we have a stronger result, so we may assume one of the first terms dominates, giving
$$\max\{ |L(P) \cap l_1|, |L(P) \cap l_2|\} = \max\{ |S|,|T|\} \gg \min\{|P'|^{1 + \frac{\delta_1}{14}},|P'|^{1 + \frac{\delta_2}{7}} \}.$$
This gives the result since $|P| = |P'|$.
\end{proof}

To summarise the proof above, let $P'\subseteq P$ be the set of Beck points of $P$, each incident to $\gg|P|$ lines in $L(P)$. Then $P'$ can be viewed as a set of $\gg |P|$-rich affine transformations in the grid $S\times T$, and the claim of Corollary  \ref{thm:lineintersection} follows from the quantitative Elekes theorem, Theorem \ref{t1:Elekes}. This means that the qualitative answer to the question was, in fact, implied in the original Elekes' Theorem \ref{t:Elekes}.

\begin{proof}[Proof of Corollary \ref{cor:cartesianintersection}] 
Let $A \subseteq \mathbb{R}$ be a finite set, and $l$ an affine line not of the form $y = -x + k$ for some $k$, or the line $y = x$. We shall prove that $|L(A \times A) \cap l| \gg |A|^{2 + \frac{1}{14}}$. 

Let $\gamma$ denote the transformation given by reflection in the line $y=x$. The Cartesian product $A \times A$ is sent to itself under this reflection. 
Furthermore, by the restriction placed on the line $l$, we must have that $l':= \gamma(l) \neq l$. 

Since $A \times A$ is symmetric under $\gamma$, it follows that $L(A \times A)$ is also symmetric under $\gamma$, and moreover that the number of intersections of $L(A \times A)$ with $l$ is precisely the number of intersections of $L(A \times A)$ with $l'$, i.e. $|L(A \times A) \cap l| = |L(A \times A) \cap l'|$. An application of Corollary \ref{thm:lineintersection} with $\delta_1 = \delta_2 = 1/2$ then gives
$$|L(A \times A) \cap l| = \max\{ |L(A \times A) \cap l|,|L(A \times A) \cap l'|\} \gg |A|^{2 + \frac{1}{14}}.$$
\end{proof}

Note that one may consider a more general version of Corollary \ref{cor:cartesianintersection}, applying to a point set in $\mathbb{R}^2$ with a symmetry.

\begin{proof}[Proof of Corollary \ref{quadrangles}]
Let us show that if a quadrangle $(g,h,u,v)$ is in $\mathcal Q(P)$, then it also satisfies the energy equation, that is $(g,h,u,v)\in Q(P)$, defined by \eqref{e:quad}. Moreover, the sets $Q(P),\mathcal Q(P)$ are equal up to the case of degenerate members $(g,h,u,v)\in Q(P)$, where the points $g,h,u,v$ are collinear, lying as elements of $\emph{Aff}(\mathbb{F})$ in the same coset of some torus $T$ (or $U$). In the sequel we will say {\em torus} and just use the notation $T$ for either case, unless there is something special as to $U$.

For the first implication, if 
$$ g^{-1}\circ h = u^{-1}\circ v = t $$\ then also
$$g\circ u^{-1}= h\circ v^{-1}=t'\,.
$$ 
Then $t,t'$, unless they are the identity, which corresponds to a trivial count, lie respectively in some unique tori $T,T'$. If say $t\in T$, this means $g,h$ lie in some left coset of $T$ and $u,v$ in some other left coset of $T$. This means the lines connecting the corresponding the sides $gh$ and $uv$ of the quadrangle $g,h,v,u$ are parallel. Similarly $g,u$ and $h,v$ lie in two distinct right cosets of $T'$, and hence the sides $gu$ and $hv$ of the quadrangle $g,h,v,u$ have the same $y$-intercept.

The converse is slightly less obvious. Suppose, $(g,h,u,v)\in \mathcal Q(P)$. We therefore have two tori $T,T'$ defined respectively  by the equal slope of the lines $gh$ and $uv$, and the equal $y$-intercept of the lines $gu$ and $hv$. Hence, there are two left cosets of $T$, with representatives $c_1,c_2$, and elements $n_1,n_2,\nu_1,\nu_2\in T$, as well as two right cosets of $T'$, with representatives $s_1,s_2$, and elements $m_1,m_2,\mu_1,\mu_2\in T'$, so that
$$
g=c_1n_1=m_1s_1,\qquad h = c_1n_2=\mu_1s_2,\qquad u=c_2\nu_1=m_2s_1,\qquad v=c_2\nu_2=\mu_2s_2\,.
$$
This implies, using the Abelian notation within the commutative $T,T'$, that  in particular
$$
(v^{-1}\circ u) + (g^{-1}\circ h) = n_2-n_1+\nu_1-\nu_2 = s^{-1} \circ (m_2-m_1+\mu_1-\mu_2)\circ s^{-1}\,,
$$
where $s=s_1$ or $s= s_2$. It follows that $m_2-m_1+\mu_1-\mu_2=n_2-n_1+\nu_1-\nu_2$ equals identity. Indeed, suppose not.  If say the left-hand side of the latter equation is in the normal subgroup $U$, then so is the right-hand side. This means that the quadrangle $(g,h,u,v)$ has four vertical sides, namely the points $g,h,u,v$ lie on the same vertical line. The rest of the tori are each other's conjugates, meeting at the identity only. Then one has $s_1=s_2$, once again making the points $g,h,u,v$ collinear.

\end{proof}

\section[Appendix]{Appendix: Bisector Energy, Beck's Theorem and Point-Plane Incidences}

The approach taken to bounding affine energy in this paper was inspired by a series of works bounding the \textit{bisector energy} of a point set. Given a finite point set $P \subset \mathbb F$, its bisector energy is defined to be the number of solutions to
\begin{equation} \label{benergy}
B(p,q)=B(s,t),\,\,\,\,\,\,\,\, p,q,s,t \in P,
\end{equation}
 where $B(p,q)$ denotes the perpendicular bisector of $p$ and $q$. 
 
 Let $\mathcal B(P)$ denote the bisector energy of a set $P$. The situation is rather similar to what we face when trying to bound the affine group energy. A trivial bound is $\mathcal{B}(P) \leq |P|^3$, and taking the point set to be equidistributed on a line or circle realises $\mathcal B(P) \gg |P|^3$. So, as with our situation, in order to make progress it is necessary to impose some conditions on the original set, ruling out highly collinear or co-circular situations.
 
Let $M$ denote the maximum number of points from $P$ on a single line or circle. The following bound was proven for $P \subset \mathbb R^2$ in \cite{LP}:
\begin{equation} \label{bisect}
    \mathcal B(P) \ll |P|^{5/2} + M|P|.
\end{equation}
The proof of \eqref{bisect} used Beck's Theorem \cite{Beck}, and since this is not known to hold in the finite field setting, \eqref{bisect} does not immediately translate to $\mathbb F_p^2$. However, in \cite{MRS} (see also \cite{MPPRS} that superseded it) , it was shown that Beck's Theorem was not necessary, and a more direct approach was given, making use of Theorem \ref{thm:pointplane}. A similar situation occurred in drafting this paper: an earlier draft used an application of Beck's Theorem to prove Lemma \ref{lem:Cfixed}, based on the approach of \cite{LP}, before we discovered that the proof can be simplified and generalised via a fairly straightforward application of Theorem \ref{thm:pointplane}.

These two instances have highlighted a connection between Beck's Theorem and Theorem \ref{thm:pointplane}. To illustrate this connection, we conclude the paper with a new, short proof of Theorem \ref{thm:pointplane} in the Euclidean setting which uses only Beck's Theorem as an input. This is a considerable simplification on the original proof from \cite{Ru}, which used deeper techniques and built on the incidence theory developed by Guth and Katz \cite{GK}. 

\begin{proof}[Proof of Theorem \ref{thm:pointplane} over $\mathbb R^3$]
 
Recall that the aim is to prove that
\begin{equation} \label{incegoal}
I(P,\Pi) \ll |\Pi||P|^{1/2} + |P||\Pi|^{1/2}  + k|\Pi|
\end{equation}
holds for any set $P$ and $\Pi$ of points and planes in $\mathbb R^3$ respectively, such that no more than $k$ points from $P$ are contained in a single line. By Cauchy-Schwarz,
\begin{align*}
I(P, \Pi) &= \sum_{ \pi \in \Pi} |\{ p \in P : p \in  \pi\}| \leq |\Pi|^{1/2} \left(\sum_{ \pi \in \Pi} |\{ p \in P : p \in  \pi\}|^2\right)^{1/2}
\\ & = |\Pi|^{1/2} \left(I(P, \Pi) + \sum_{\pi \in \Pi}|\{(p_1,p_2) \in (P \cap \pi)^2  : p_1 \neq p_2 \}|\right)^{1/2}.
 \end{align*}
 If the first term in the bracket dominates then we have $I(P, \Pi) \ll |\Pi|$, which is much stronger than required. Therefore we can disregard it, and assume that
 \begin{equation} \label{halfway}
 I(P, \Pi) \ll |\Pi|^{1/2} \left( \sum_{\pi \in \Pi}|\{(p_1,p_2) \in (P \cap \pi)^2  : p_1 \neq p_2 \}|\right)^{1/2}.
 \end{equation}
 In each plane $\pi$, Beck's Theorem tells us that there are two possibilities
 \begin{itemize}
     \item[(i)]  $\gg |\pi \cap P|^2$ of the pairs of distinct points from $\pi \cap P$ are contained on a single line,
     \item[(ii)] $\gg|\pi \cap P|^2$ of the pairs of distinct points from $\pi \cap P$ are contained on lines which support less than $C$ points from $ \pi \cap P$, for some absolute constant $C$.
 \end{itemize}
Let $\Pi_1$ be the set of all planes of type (i), and $\Pi_2$ those of type (ii).  We seek to bound
\begin{align} \label{splitsum}
\sum_{\pi \in \Pi}|\{(p_1,p_2) \in (P \cap \pi)^2  : p_1 \neq p_2 \}| &\leq \sum_{\pi \in \Pi_1}|\{(p_1,p_2) \in (P \cap \pi)^2  : p_1 \neq p_2 \}| \nonumber
\\& + \sum_{\pi \in \Pi_2}|\{(p_1,p_2) \in (P \cap \pi)^2  : p_1 \neq p_2 \}|.
\end{align}
Since there are no more than $k$ points from $P$ on any line, the first term above contributes at most $k^2|\Pi|$. For the second term, we use Cauchy-Schwarz to obtain
\begin{align*}
\sum_{\pi \in \Pi_2}|\{(p_1,p_2) \in (P \cap \pi)^2  : p_1 \neq p_2 \}| &\leq C^2 \sum_{ l \in L(P) } | \{\pi \in \Pi : l \in \pi \}|
\\& \ll |L(P)|^{1/2} \left( \sum_{l \in L(P)} \sum_{\pi_1, \pi_2 \in \Pi :l \in \pi_1 \cap \pi_2} 1      \right)^{1/2}
\\ & \ll |P|\left( |\Pi|^2 + \sum_{ l \in L(P) } | \{\pi \in \Pi : l \in \pi \}|\right )^{1/2}
\end{align*}
If the second term is dominant then we get
\[
\sum_{\pi \in \Pi_2}|\{(p_1,p_2) \in (P \cap \pi)^2  : p_1 \neq p_2 \}| \ll |P|^2.
\]
Otherwise the first term is dominant and we get $|P||\Pi|$ on the right hand side. Therefore,
\[
\sum_{\pi \in \Pi_2}|\{(p_1,p_2) \in (P \cap \pi)^2  : p_1 \neq p_2 \}| \ll |P|(|P|+|\Pi|).
\]
Combining this with \eqref{halfway} and \eqref{splitsum}, we finally conclude that
\[
I(P, \Pi) \ll k|\Pi| +|P||\Pi|^{1/2} + |P|^{1/2}|\Pi|,
\]
as required.
\end{proof}

One may compare the proof of \eqref{incegoal} above with the work of Elekes and Toth \cite{ET}, who also proved a point-plane incidence bound in $\mathbb R^3$ with a slightly different notion of non-degeneracy. The bound in \cite{ET} is better in the asymmetric case. Rather than using Beck's Theorem \cite{Beck}, the proof in \cite{ET} uses the full strength of the Szemer\'{e}di-Trotter Theorem \cite{Sze-Tro}.

\bigskip

\noindent {\bf Acknowledgements}. The project begun while the second and third listed authors visited The University of Georgia. The first listed author is supported by the NSF Award 1723016 and gratefully acknowledges the support from the RTG in Algebraic Geometry, Algebra, and Number Theory at the University of Georgia, and from the NSF RTG grant DMS-1344994. The second and fourth listed authors were partially supported by the Austrian Science Fund FWF Project P 30405-N32. The third listed author was partially supported by the Leverhulme grant RPG-2017-371. We thank Brendan Murphy and Adam Sheffer for helpful discussions and also Ilya Shkredov for his feedback on an earlier draft of the paper. Finally, we thank an anonymous referee for their comments and corrections.

\phantomsection

\addcontentsline{toc}{section}{References}

\bigskip

\noindent Giorgis Petridis\\
Department of Mathematics, University of Georgia, Athens, GA 30602, USA.\\
giorgis@cantab.net

\medskip
\noindent Oliver Roche-Newton\\
Johann Radon Institute for Computational and Applied Mathematics \\ 69 Altenberger Stra{\ss}e,
Linz 4040, Austria.\\
o.rochenewton@gmail.com

\medskip
\noindent Misha Rudnev\\
School of Mathematics, Fry Building, Bristol BS8 1TH, UK.\\
misharudnev@gmail.com

\medskip
\noindent Audie Warren\\
Johann Radon Institute for Computational and Applied Mathematics \\
69 Altenberger Stra{\ss}e,
Linz 4040, Austria.\\
audie.warren@oeaw.ac.at

\end{document}